\documentclass[11pt]{amsart}
\usepackage{amsmath}
\usepackage{amsfonts}
\usepackage{amssymb}
\usepackage{graphicx}
\usepackage{amsthm}
\setcounter{MaxMatrixCols}{30}
\usepackage{xcolor,cancel}
\usepackage[normalem]{ulem}
\def\p{\partial}
\def\Riem{{\mathcal R}}
\def\R{\mathbb{R}}
\def\Vol{\mathrm{Vol}}
\def\Ric{\mathrm{Ric}}

\newtheorem*{thmA}{Theorem A}
\newtheorem*{thmB}{Theorem B}

\theoremstyle{definition}

\newtheorem*{remark}{Remark}
\linespread{1.3}
\usepackage{fullpage}

\textheight=9truein
\parskip=1mm
\usepackage{hyperref}   % must be loaded before enumitem
\usepackage{enumitem}
\begin{document}

\title{Evolution of relative Yamabe constant \\ under Ricci
  Flow}
\author{Boris Botvinnik}
\address{
Department of Mathematics\\
University of Oregon \\
Eugene, OR, 97403\\
USA
}
\email{botvinn@uoregon.edu}
\author{Peng Lu}
\address{
Department of Mathematics\\
University of Oregon \\
Eugene, OR, 97403\\
USA
}
\email{penglu@uoregon.edu}
\subjclass[2010]{53C27, 57R65, 58J05, 58J50}

  \date{\today}
\maketitle
\begin{abstract}
Let $W$ be a manifold with boundary $M$ given together with a
conformal class $\bar C$ which restricts to a conformal class $C$ on
$M$.  Then the relative Yamabe constant $Y_{\bar C}(W,M;C)$ is
well-defined.  We study the short-time behavior of the relative Yamabe
constant $Y_{[\bar g_t]}(W,M;C)$ under the Ricci flow $\bar g_t$ on
$W$ with boundary conditions that mean curvature $H_{\bar g_t}\equiv
0$ and $\bar{g}_t|_M\in C = [\bar{g}_0]$.  In particular, we show that
if the initial metric $\bar{g}_0$ is a Yamabe metric, then, under some
natural assumptions, $\left.\frac{d}{dt}\right|_{t=0}Y_{[\bar
    g_t]}(W,M;C)\geq 0$ and is equal to zero if and only the metric
$\bar{g}_0$ is Einstein.
\end{abstract}  

%%%%%%%%%%%%%%%%%%%%%%%%%%%
%%%%%%%%%%%%%%%%%%%%%%%%%%%
\section{Introduction}
In this short note, we analyze the behavior of the relative Yamabe
constant of manifolds with boundary under the Ricci flow with
appropriate boundary conditions. It turns out that the evolution
equation of the relative Yamabe constant is analogous to the one on
closed manifolds, given in \cite{Lu}.

%%%%%%%%%%%%%%%%%%%%%%%%%%%
\subsection{Relative Yamabe constant}
Let $W$ be a compact manifold with boundary $M =\partial W\neq
\emptyset$ and $\dim W = n\geq 3$. For a metric $\bar g$ on $W$ we
denote by $g=\bar g|_{M}$ and by $H_{\bar g}$ the mean curvature along
the boundary $M$ with respect to the outward unit normal vector.  We
also denote by $[\bar g]$ and $[g]$ the corresponding conformal
classes, and by $\mathcal{C}(W)$ and $\mathcal{C}(M)$ the spaces of
conformal classes on $W$ and $M$, respectively. Given $\bar C\in
\mathcal{C}(W)$ we define $C= \p \bar C$ to be the restriction $\bar
C|_{M}$ and we denote by $\mathcal{C}(W,M)$ the space $\{(\bar C, C)
\ | \ \bar{C} \in \mathcal{C}(W), \ C=\p \bar C\ \}$.  We also
consider a normalized conformal class $\bar C^{(0)}= \{ \ \bar g \in
\bar C \ | \ H_{\bar g}\equiv 0 \ \}$. It is easy to observe that the
normalized class $\bar C^{(0)}\subset \bar C$ is always non-empty, and
there is a natural bijection between the spaces $\mathcal{C}(W,M)$ and
$\mathcal{C}^{(0)}(W,M) = \{ (\bar C^{(0)},C) \ | \ \bar C \in
\mathcal{C}(W), \ C=\p \bar C\ \}$ (see \cite[(1.4)]{Escobar92}).

Let $\Riem(W)$ stand for the space of Riemannian metrics on $W$.  
Fix a conformal class $C\in \mathcal{C}(M)$, we need the following
spaces of metrics:
\begin{equation*}
\begin{array}{c}
  \Riem_C(W,M)  =  \{ \bar g \in \Riem(W) \ | \ \p [\bar g] = C \ \}, \ \ \
  \Riem_C^{(0)}(W,M)  = \{ \bar g \in  \Riem_C(W,M) \ |
  \ H_{\bar g}\equiv 0 \ \}.
\end{array} 
\end{equation*}
Consider the normalized Einstein-Hilbert functional $I_C:
\Riem_C^{(0)}(W,M) \to \R$ given by
\begin{equation*}
  I_C(\bar g) =
  \frac{\int_W R_{\bar g}d\sigma_{\bar g}}{\Vol_{\bar g}(W)^{\frac{n-2}{n}}}
\end{equation*}  
where $R_{\bar g}$ is the scalar curvature and $d\sigma_{\bar g}$ is
the volume element.  Similarly to the case of closed manifolds, the
Einstein-Hilbert functional $I_C$ is not bounded for any manifold of
dimension $\geq 3$ with any fixed conformal class on the boundary.

Denote by $\mathrm{Crit}(I_C)$ the set of \emph{critical metrics} of the
functional $I_C$.  It is well-known that the set $\mathrm{Crit}(I_C)
\subset \Riem_C^{(0)}(W,M)$ coincides with the set of Einstein metrics
$\bar g$ on $W$ such that $\p[\bar g]=C$ and the mean curvature
$H_{\bar g}\equiv 0$ on $M$, see \cite{AB06}.

Fix $(\bar C, C)\in \mathcal{C}(W,M)$, the \emph{relative Yamabe
  constant} $Y_{\bar C}(W, M;C)$ is defined as
\begin{equation*}
  Y_{\bar C}(W, M;C)= \inf_{\bar g\in\bar C^{(0)}} I_C(\bar g).
\end{equation*} 
The functional $I_C|_{\bar C^{(0)}}: \bar g \mapsto I(\bar g)$ from
restriction is called the \emph{Yamabe functional}.

Let $\nu$ be the outward unit normal vector along the boundary $M$.
Fix a $\bar g \in \bar{C}^{(0)}$, then any metric in $\bar C^{(0)}$
can be written as $u^{\frac{4}{n-2}}\bar g$ where $u\in
C^{\infty}_+(W)$ is a smooth positive function satisfying
$\partial_{\nu} u \equiv 0 $ on $M$, see \cite[(1.4)]{Escobar92}.

Hence the relative Yamabe constant $Y_{[\bar
    g]}(W, M;[g])$ can be written as
\begin{equation}\label{eq rel Yamabe functional-b}
  Y_{[\bar g]}(W, M;[g])= \inf_{\small{\begin{array}{c}u \in
        C^{\infty}_+(W )\\ \partial_{\nu}u\equiv 0 \ \text{on
          $M$}\end{array}}} \frac{\int_{W}\left(\frac{4(n-1)}{n-2}
    \left\vert \nabla_{\bar g} u \right\vert^{2}+R_{\bar g}
    u^{2}\right) d\sigma _{\bar g}} {\left( \int_{W}
    u^{\frac{2n}{n-2}} d \sigma_{\bar g}\right)^{\frac{n-2}{n}}},
\end{equation} 
where $\nabla_{\bar g}$ is the Riemannian connection of the metric $\bar g$.  

The Euler-Lagrange equation for any
minimizer $u$ of the functional (\ref{eq rel Yamabe functional-b}) is
\begin{equation}\label{eq Yamabe Euler-Lag-b}
\begin{array}{cl}
\displaystyle
  -\frac{4\left( n-1\right) }{n-2}\Delta_{\bar g} u +R_{\bar g} u = Y_{[\bar
      g]}(W, M;[g]) u^{\frac{n+2}{n-2}}  & \quad \text{ on } W,  \\
\partial_{\nu} u \equiv 0  & \quad \text{
    on }  M.  
\end{array}  
\end{equation}
We assume that the  minimizer $u$ is normalized as
\begin{align}
   \int_{W} u^{\frac{2n}{n-2}}d\sigma_{\bar g}
  =1. \label{eq Yam vol 1} 
\end{align}
It is well-known that there exists a solution of (\ref{eq Yamabe
  Euler-Lag-b}) in a generic case due to Escobar, see \cite{Escobar92}
for details. Actually Escobar's results have been generalized in many
ways, for example, see \cite{Brendle14,Marcus06}.  It is also
well-known that the normalized solution $u$ is unique if $Y_{[\bar
    g]}(W, M;[g])\leq 0$ and there are examples of multiple solutions
if $Y_{[\bar g]}(W, M;[g])>0$.  For a normalized minimizer $u$, the
metric $u^{\frac{4}{n-2}}\bar g$ has volume one, constant scalar
curvature $Y_{[\bar g]}(W, M;[g])$, and zero mean curvature along the
boundary.  The metric is called a \emph{Yamabe metric}.

There is a remarkable property of the relative Yamabe constant $Y_{\bar C}(W,
M;C)$, namely, we have the inequality $ Y_{\bar C}(W, M;C)\leq Y_{\bar
  C_{st}}(D^n,S^{n-1};C_{st}), $ where $\bar C_{st}$ is conformal
classes of the standard metric on the hemisphere $D^n$ with totally
geodesic boundary $S^{n-1}$. Furthermore, the equality holds if and
only if the pair $((W,\bar C), (M,C))$ is conformally equivalent to
$((D^n,\bar C_{st}),(S^{n-1}, C_{st}))$.  This leads to the definition
of the \emph{Yamabe invariant} $Y(W, M;C)$:
\begin{equation}\label{Yamabe-inv}
  Y(W, M; C)= \sup_{\bar C, \, \p \bar C = C}
      Y_{\bar C}(W, M;C).
\end{equation} 
\begin{remark}
Assume that a conformal metric $\tilde g = \tilde
u^{\frac{4}{n-2}}\bar g$ has zero mean curvature on $M$ and has
constant scalar curvature $R_{\tilde g} = \tilde R$. Then the function
$\tilde u$ satisfies the Euler-Lagrange equations (\ref{eq Yamabe
  Euler-Lag-b}), where the relative Yamabe constant $Y_{[\bar g]}(W,
M;[g])$ is replaced by $\tilde R$. Hence the metric $\tilde
u^{\frac{4}{n-2}}\bar g$ is a critical metric, but not necessarily a
minimum of the functional $I_C|_{\bar C^{(0)}}$.
\end{remark}  

%%%%%%%%%%%%%%%%%%%%%%%%
\subsection{The Result}
We consider the Ricci flow with the following boundary conditions:
\begin{equation}\label{RF-1} 
  \p_t \bar g_t= - 2 \Ric_{\bar g_t} \quad \text{ with mean curvature } 
   H_{\bar g_t} \equiv 0 
  \text{ and } \bar g_t|_M \in [\bar{g}_{0}|_M] =C_0. 
\end{equation}
Here is our main result:
\begin{thmA} \label{thm main}
Let $\bar{g}_t$ be the solution of Ricci flow {\rm (\ref{RF-1})} on $W$ with
initial metric $\bar g_{0}$ being a Yamabe metric.  Assume that
  there is a $C^{1}$-family of positive smooth functions $u(t)$,
  $t\in[0,\epsilon)$ for some $\epsilon>0$, with $u(0)=1$ such that the
    metric $u(t)^{\frac {4}{n-2}}\bar g_t$ is a Yamabe
    metric (with unit volume and constant scalar curvature $Y_{[\bar
        g_t]}(W, M;C_0)$).  Then $\left.  \frac{d} {dt}\right\vert
    _{t=0}Y_{[\bar g_t]}(W, M;C_0) \geq 0$ and the equality holds if and
    only if $\bar g_{0}$ is an Einstein metric. 
\end{thmA}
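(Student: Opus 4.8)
The plan is to write $Y(t):=Y_{[\bar g_t]}(W,M;C_0)$ as the Yamabe functional of $\bar g_t$ evaluated at its minimizing conformal factor $u(t)$, and then to split the $t$-derivative at $t=0$ into a ``conformal'' part (the variation in $u$) and a ``flow'' part (the variation in $\bar g_t$). Concretely, let $Q_{\bar g}(u)$ denote the quotient in (\ref{eq rel Yamabe functional-b}), so that $Y(t)=Q_{\bar g_t}(u(t))$ and $Q_{\bar g}(1)=I_{C_0}(\bar g)$. Since $u(t)$ is a $C^1$-family with $u(0)=1$, the chain rule gives
\begin{equation*}
\left.\frac{d}{dt}\right|_{t=0}Y(t)
= \left.\frac{\partial Q_{\bar g_0}}{\partial u}\right|_{u=1}\!\big[\dot u(0)\big]
+ DI_{C_0}(\bar g_0)\big[\,\partial_t \bar g_t|_{t=0}\,\big],
\end{equation*}
where the second term uses $Q_{\bar g}(1)=I_{C_0}(\bar g)$ and $\partial_t\bar g_t|_{t=0}=-2\Ric_{\bar g_0}$.

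The first step is to show the conformal term vanishes. Because $\bar g_0$ is a Yamabe metric, $u=1$ is a minimizer of $Q_{\bar g_0}$ over positive functions with $\partial_\nu u\equiv 0$, hence a critical point; equivalently $u=1$ solves the Euler--Lagrange problem (\ref{eq Yamabe Euler-Lag-b}) with eigenvalue $Y_0:=Y(0)$. Integrating the first variation of the numerator of $Q_{\bar g_0}$ by parts produces a boundary integral proportional to $(\partial_\nu u)\,\dot u(0)$, which vanishes at $u=1$ since $\partial_\nu 1=0$; here the Neumann condition encoded by $H_{\bar g_0}\equiv 0$ is exactly what kills the boundary contribution. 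Combined with the normalization (\ref{eq Yam vol 1}), this forces $\left.\frac{\partial Q_{\bar g_0}}{\partial u}\right|_{u=1}[\dot u(0)]=0$.

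The second step is to compute the flow term $DI_{C_0}(\bar g_0)[-2\Ric_{\bar g_0}]$. Writing $I_{C_0}=S\cdot V^{-(n-2)/n}$ with $S=\int_W R_{\bar g}\,d\sigma_{\bar g}$ and $V=\Vol_{\bar g}(W)$, and using $V(0)=1$, $S(0)=Y_0$ (as $R_{\bar g_0}\equiv Y_0$ and $\Vol_{\bar g_0}(W)=1$), one finds $\dot V(0)=\int_W\tfrac12\mathrm{tr}(-2\Ric)\,d\sigma=-\int_W R_{\bar g_0}\,d\sigma=-Y_0$. For $\dot S(0)$ I would use the standard evolution $\partial_t R_{\bar g_t}=\Delta_{\bar g_t}R_{\bar g_t}+2|\Ric_{\bar g_t}|^2$ together with $\partial_t\,d\sigma=\tfrac12\mathrm{tr}(-2\Ric)\,d\sigma=-R\,d\sigma$; since $R_{\bar g_0}\equiv Y_0$ is constant its Laplacian (and hence any boundary term $\int_M\partial_\nu R$) drops out at $t=0$, giving $\dot S(0)=2\int_W|\Ric_{\bar g_0}|^2\,d\sigma-Y_0^2$. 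Assembling the pieces,
\begin{equation*}
\left.\frac{d}{dt}\right|_{t=0}Y(t)
= \dot S(0)-\tfrac{n-2}{n}Y_0\,\dot V(0)
= 2\int_W |\Ric_{\bar g_0}|^2\,d\sigma_{\bar g_0} - \tfrac{2}{n}\,Y_0^2 .
\end{equation*}

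Finally, since $R_{\bar g_0}=\mathrm{tr}_{\bar g_0}\Ric_{\bar g_0}\equiv Y_0$ and $\Vol_{\bar g_0}(W)=1$, the pointwise trace (Cauchy--Schwarz) inequality $|\Ric_{\bar g_0}|^2\ge \tfrac1n R_{\bar g_0}^2$ gives $\int_W|\Ric_{\bar g_0}|^2\,d\sigma\ge \tfrac1n Y_0^2$, whence $\left.\frac{d}{dt}\right|_{t=0}Y(t)\ge 0$. Equality in the trace inequality holds pointwise if and only if $\Ric_{\bar g_0}=\tfrac{R_{\bar g_0}}{n}\bar g_0=\tfrac{Y_0}{n}\bar g_0$, i.e. if and only if $\bar g_0$ is Einstein. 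I expect the main technical obstacle to be the careful bookkeeping of boundary terms: one must verify that the boundary conditions $H_{\bar g_t}\equiv 0$ and $\bar g_t|_M\in C_0$ make the Gibbons--Hawking--York--type boundary contributions in both $\frac{\partial Q}{\partial u}$ and $DI_{C_0}$ vanish at $t=0$ (which is transparent here only because $R_{\bar g_0}$ is constant and $\partial_\nu u|_{u=1}=0$), and to confirm that the interior evolution $\partial_t R=\Delta R+2|\Ric|^2$ is unaffected by these boundary conditions.
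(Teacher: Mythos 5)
Your proof is correct, but it takes a genuinely different and more direct route than the paper. The paper first establishes a general evolution formula (Theorem B) for the subcritical constants $\mathsf{Y}_p(W,\bar g_t)$, valid for all $t\in[0,T)$ and all $p\in\bigl(1,\tfrac{n+2}{n-2}\bigr]$; this forces a careful analysis of the evolving Neumann condition $\partial_{\nu_t}u(t)=0$ (the computations of $\partial_t\nu_t$, $\partial_{\nu_t}h$, and $\overline{\Ric}(\bar\nabla u,\nu_t)$ in Parts A--C of \S 2), and Theorem A is then obtained by specializing to $p=\tfrac{n+2}{n-2}$ and $t=0$, where $u\equiv 1$ and $\bar R\equiv Y_0$ make all the extra interior and boundary terms vanish. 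You instead exploit from the outset that $u(0)\equiv 1$ is a critical point of the Yamabe quotient $Q_{\bar g_0}$: by the chain rule the conformal variation $\partial_u Q_{\bar g_0}|_{u=1}[\dot u(0)]$ drops out (an envelope-type argument; in fact at $u=1$ the first variation vanishes in \emph{every} direction, since $\bar\nabla u=0$ makes integration by parts unnecessary, so admissibility of $\dot u(0)$ is not even at issue), and what remains is the classical first variation of the normalized Einstein--Hilbert functional along the Ricci flow, whose boundary contributions are trivially zero because $R_{\bar g_0}$ is constant. Your final expression $2\int_W|\Ric_{\bar g_0}|^2\,d\sigma-\tfrac2n Y_0^2$ equals the paper's formula (\ref{Yam}), namely $2\int_W|\Ric^0_{\bar g_0}|^2\,d\sigma$, via $|\Ric|^2=|\Ric^0|^2+\bar R^2/n$, so the equality case is handled identically. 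What the paper's route buys is the full evolution equation (\ref{eq deriv yam p}) at all times and all subcritical exponents, which is of independent interest; what your route buys is brevity and transparency for the statement actually at hand. The one caveat --- shared equally by the paper's own proof --- is the implicit assumption that $t\mapsto\bar g_t$ is differentiable at $t=0$ so that the pointwise evolution equations for $\bar R$ and $d\sigma_{\bar g_t}$ may be invoked there; this is the boundary-regularity issue of the flow discussed in \S 1.3, not a defect of your argument relative to the paper's.
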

In fact, we will compute the evolution equation for the Yamabe
constant and the corresponding sub-critical constant under Ricci flow
(\ref{RF-1}), see Theorem B below. If the initial metric $\bar g_0$ is
a Yamabe metric, then the formula is very simple:
\begin{equation}\label{Yam}
  \left.  \frac{d}{dt}\right\vert _{t=0}Y_{[\bar g_t]}(W,M;C_0)
  =2 \int_{W}
  |\mathrm{Ric}^{0}_{\bar g_0}|^2 d\sigma_{\bar g_0} \geq 0,
\end{equation}
where $\mathrm{Ric}^{0}_{\bar g_0}$ is the norm of the traceless Ricci tensor.
The proof is given at the end of \S \ref{sec 2 proofs}.

%%%%%%%%%%%%%%%%%%%%%%%%
\subsection{Ricci Flow on manifolds with boundary}
Since we have assumed the existence of the Ricci flow {\rm
  (\ref{RF-1})} on manifolds with boundary in Theorem A, here we
briefly review some results due to Gianniotis \cite{Gi16} on the
initial value problem of {\rm (\ref{RF-1})}.

Let $(W,\bar g^{(0)})$
be a Riemannian manifold with the mean curvature $H_{\bar
  g^{(0)}}\equiv 0$ along $M$. Choose conformal class
$C_{0}=[\bar{g}^{(0)}|_M]$ on boundary $M$.  According to
\cite[Theorem 1.2]{Gi16}, there exists a short-time solution
$\bar{g}_t$ of (\ref{RF-1}) in the space
\begin{equation*}
  C^{\infty}(W \times (0,T])    \cap C^{1+\alpha, (1+\alpha)/2}(W \times [0,T])
\end{equation*}
with the property that $\bar{g}_t$ converges to the initial metric
$\bar g^{(0)}$ in the $C^{1+\alpha}(W)$-Cheeger-Gromov topology and
$C^{\infty}$-topology away from the boundary $M$ as $t \rightarrow
0^+$, (i.e. it converges to $\bar g^{(0)}$
up to diffeomorphism). 

To get better regularity of the short-time solution $\bar{g}_t$ of
(\ref{RF-1}), the initial metric $\bar g^{(0)}$ has to satisfy the
following compatibility condition, namely,
\begin{equation} \label{eq h_1 order compat}
  (\operatorname{Ric}_{\bar{g}^{(0)}})^T =
  \tilde{f} \cdot (\bar{g}^{(0)})^T,   
\end{equation}
see \cite[Theorem 1.1]{Gi16}. Here $\tilde{f}$ is a smooth positive
function and $S^T$ denotes the tangential to the boundary part of the
tensor $S$, see \cite[p.314]{Gi16}.  Then the solution $\bar{g}_t$ is
in $C^{2+\alpha, (2+\alpha)/2}(W \times [0,T])$ and converges to the
initial metric $\bar g^{(0)}$ in $C^{2+\alpha}(W)$ (\cite[Theorem 4.2]{Gi16}).

From above discussion we conclude that the  relative Yamabe constant $Y_{[\bar
    g_t]}(W, M;[\bar{g}^{(0)}|_M])$ is differentiable for
$t>0$. Furthermore, if the compatibility condition (\ref{eq h_1 order
  compat}) is satisfied, then  relative Yamabe constant $Y_{[\bar g_t]}(W,
M;[\bar{g}^{(0)}|_M])$ is continuous at $t=0$.
\begin{remark}
Even though the boundary conditions given in (\ref{RF-1}) are not as
general as the ones studied by Gianniotis \cite{Gi16}, they define the
Ricci flow on the space $\Riem_{C}^{(0)}(W,M)$. Furthermore, the space
$\Riem_{C}^{(0)}(W,M)$ is suitable for defining a relative version of
the Perelman's functional to provide a gradient flow equivalent to the
Ricci flow (\ref{RF-1}).
\end{remark}
\subsection{Acknowledgement}
P.L.'s research is partially supported by Simons Foundation through
Collaboration Grant 229727 and 581101. We thank K. Akutagawa
and S. Hamanake for their help explaining to us a relevant Koiso's
decomposition type result for manifolds with boundary.

%%%%%%%%%%%%%%%%%%%%%%%%
%%%%%%%%%%%%%%%%%%%%%%%%
\section{Proofs} \label{sec 2 proofs}
\subsection{Subcritical Yamabe problem}
In order to analyze the Yamabe problem, it is important to consider
the \emph{sub-critical regularization} of the Yamabe functional, namely,
the functional $\mathsf{Y}_p:   \Riem^{(0)}(W) =
\{  \bar g \in \Riem(W) \ | \ H_{\bar g}\equiv 0 \} \rightarrow \mathbb{R}$ defined by
\begin{equation*}
\mathsf{Y}_p(W,\bar g):= \inf_{\small{\begin{array}{c}u \in C^{\infty}_+(W )\\
      \partial_{\nu}u\equiv 0 \ \text{on $M$}\end{array}}}
  \frac{\int_{W}\left(\frac{4(n-1)}{n-2}
    \left\vert \nabla_{\bar g}
    u \right\vert^{2}+R_{\bar g} u^{2}\right)  d\sigma _{\bar g}}
{\left( \int_{W} u^{p+1} d \sigma_{\bar g}\right)^{\frac{2}{p+1}}},
\end{equation*}  
for $p\in [1,\frac{n+2}{n-2})$. 
Note that the Yamabe
constant $Y_{[\bar g]}(W, M;[\bar{g}|_M])$ equals to the constant
$\mathsf{Y}_{p}(W,\bar g)$ if $p$ attains the critical value
$p=\frac{n+2}{n-2}$.

Clearly, the corresponding  Euler-Lagrange equation of functional $\mathsf{Y}_p$  is 
\begin{equation}\label{eq Yamabe Euler-Lag-b-p}
\begin{array}{cl}
\displaystyle
-\frac{4\left( n-1\right) }{n-2}\Delta_{\bar g}
u +R_{\bar g} u = \mathsf{Y}_{p}(W,\bar g)
 u^{p}  & \quad \text{ on } W,  \\
\partial_{\nu} u \equiv 0  & \quad \text{ on }  M.  
\end{array}  
\end{equation}
Again, we assume the following normalization condition:
\begin{align}
   \int_{W} u^{p+1}d\sigma_{\bar g}
  =1. \label{p-norm-b} 
\end{align}
\begin{remark}
It should be noted that the existence of solution $u$ of (\ref{eq
  Yamabe Euler-Lag-b-p}) and (\ref{p-norm-b}) follows from the
direct method in the calculus of variation because $p$ is sub-critical
exponent.  In that case the constants $\mathsf{Y}_{p}(W,\bar g)$ have
the same sign as the  relative Yamabe constant $Y_{[\bar g]}(W, M;[\bar{g}|_M])$,
however, the value of $\mathsf{Y}_{p}(W,\bar g)$ depends on the metric
$\bar g$, not only on the conformal class $[\bar g]$. In particular,
if $p=1$, $\mathsf{Y}_{p}(W,\bar g)$ coincides with the principal
eigenvalue of the conformal Laplacian with minimal boundary condition,
see \cite{Escobar92} for details. 
\end{remark}
\subsection{Evolution equations for the constant $\mathsf{Y}_p(W,\bar g)$}
Here is our main technical result:
\begin{thmB} \label{prop 1} 
Let $\bar g_t$ be a solution of the Ricci flow {\rm (\ref{RF-1})} on $W$ for $t\in [0,T)$.
Denote $g_t = \bar g_t|_M$. Given $p\in (1,\frac{n+2}{n-2}]$, assume
that there is a $C^{1}$-family of smooth positive functions $u(t)$,
$t \in [0,T)$, which satisfy
\begin{align}
& -\frac{4 (n-1)  }{n-2}\Delta_{\bar{g}_ t}u (
t)  +R_{\bar{g}_ t}u (  t)     =\mathsf{Y}_p(W,\bar g_t) u(  t)  ^{p} 
\quad \text{ on } W, \label{eq 1p} \\
& \ \ \ \ \ \ \ \ \ \ \ \ \ \ \ \ \ \int_{W}u(  t)  ^{p+1}d\mu_{\bar{g}_ t}    =1,
\label{eq volume 1p}  \\
&
\ \ \ \ \ \ \ \ \ \ \ \ \ \ \ \ \ 
\partial_{\nu_t} u (t) =0 \quad \text{ on } M,   \label{eq u t normal} 
\end{align}
where $\nu_t$ is the outward  unit normal vector with respect to metric
$\bar{g}_t$.  Then

\begin{align}
 \frac{d}{dt}\mathsf{Y}_p(W,\bar g_t)  = & \int_{W}\left(
\frac{8 ( n-1 )  }{n-2} \overline{\Ric}^{0} ( \bar{\nabla}u, \bar{\nabla}u)+2\left\vert
\overline{\Ric}^{0}\right\vert ^{2}u^{2} \right)  d\sigma_{\bar{g}_ t} \nonumber \\
&  +\left(  \frac{2}{n}-\frac{p-1}{p+1}\right)  \int_{W}\left(  \frac{4(
n-1)  }{  n-2  } \bar{R} \left\vert \bar{\nabla} u \right\vert ^{2}
+\bar{R}^{2}u^{2}\right)  d\sigma_{\bar{g}_ t } \nonumber \\ 
& +\int_{W} \left (u^2 \bar{\Delta} \bar{R} - \frac{4(n-1)}{(p+1)(n-2)  } \bar{R} \bar{\Delta} 
u^{2} \right ) d \sigma_{\bar{g}_ t}  \nonumber \\
&   -\frac{8(n-1)}{n-2} \int_M \left ( 2 u \overline{\Ric}(\bar{\nabla} u, \nu_t)  +  u 
\partial_{\nu_t} h \right ) d \sigma_{g_t} ,  \label{eq deriv yam p}
\end{align}
where $u=u\left( t\right) $, $h=\frac{\partial u}{\partial t}$,  and
$\overline{\Ric}^{0}$, $\bar{\nabla}$, $\bar{\Delta}$, and $\bar{R}$ are the traceless Ricci tensor,
the Riemann connection, the Laplace-Beltrami operator, and the scalar
curvature of metric $\bar{g}_ t$, respectively.
\end{thmB}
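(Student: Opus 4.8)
The plan is to reduce the computation to differentiating the value of the energy at the minimizer. Multiplying the Euler--Lagrange equation (\ref{eq 1p}) by $u$, integrating over $W$, and using the Neumann condition (\ref{eq u t normal}) to discard the boundary term in Green's identity, the normalization (\ref{eq volume 1p}) gives the clean expression
\begin{equation*}
\mathsf{Y}_p(W,\bar g_t) = \int_W\Big(\tfrac{4(n-1)}{n-2}|\bar\nabla u|^2 + \bar R\, u^2\Big)\,d\sigma_{\bar g_t},
\end{equation*}
so it suffices to differentiate the right-hand side in $t$. I would insert the standard Ricci-flow evolution identities $\partial_t\, d\sigma = -\bar R\, d\sigma$, $\partial_t \bar R = \bar\Delta \bar R + 2|\overline{\Ric}|^2$, and $\partial_t \bar g^{ij} = 2\overline{\Ric}^{ij}$ (so that $\partial_t |\bar\nabla u|^2 = 2\overline{\Ric}(\bar\nabla u,\bar\nabla u) + 2\langle\bar\nabla u,\bar\nabla h\rangle$), together with $\partial_t(u^2) = 2uh$ where $h=\partial_t u$.

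For the interior terms the computation mirrors the closed case \cite{Lu}. Collecting the terms containing $h$ produces $\tfrac{8(n-1)}{n-2}\int_W\langle\bar\nabla u,\bar\nabla h\rangle + 2\int_W \bar R\,uh$, which after integration by parts and one use of (\ref{eq 1p}) equals $2\mathsf{Y}_p\int_W u^p h\,d\sigma_{\bar g_t}$; differentiating the constraint (\ref{eq volume 1p}) gives $\int_W u^p h = \tfrac{1}{p+1}\int_W \bar R\, u^{p+1}$, and applying (\ref{eq 1p}) again (now multiplied by $\bar R u$) rewrites this as curvature integrals, in the course of which the gradient term $\int_W u\langle\bar\nabla\bar R,\bar\nabla u\rangle$ appears and, upon integration by parts, yields the companion $-\tfrac{4(n-1)}{(p+1)(n-2)}\bar R\,\bar\Delta u^2$ of the third line; the term $\int_W u^2\bar\Delta\bar R$ comes directly from $\partial_t\bar R$. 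Splitting $\overline{\Ric}=\overline{\Ric}^0+\tfrac{\bar R}{n}\bar g$ and $|\overline{\Ric}|^2=|\overline{\Ric}^0|^2+\tfrac{\bar R^2}{n}$ then separates the traceless contributions (the first line of (\ref{eq deriv yam p})) from the scalar ones. The coefficient $\tfrac{2}{n}-\tfrac{p-1}{p+1}$ of the second line is exactly the sum of $+\tfrac{2}{n}$ (trace of $\overline{\Ric}$), $-1$ (from $\partial_t\,d\sigma$), and $+\tfrac{2}{p+1}$ (from the $h$-terms), using $\tfrac{2}{p+1}-1=-\tfrac{p-1}{p+1}$; as a sanity check it vanishes at the critical exponent $p=\tfrac{n+2}{n-2}$, recovering the simpler Yamabe formula.

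The main obstacle, and the only genuinely new feature compared with \cite{Lu}, is the careful bookkeeping of the boundary contributions over $M$, which vanish trivially in the closed case. Every integration by parts now carries a boundary integral; those weighted by $\partial_{\nu_t}u$ or $\partial_{\nu_t}(u^2)$ vanish by (\ref{eq u t normal}), but two survive and must be tracked with correct coefficients: handling the mixed gradient term via $\int_W u\,\bar\Delta h$ produces $\int_M u\,\partial_{\nu_t}h$ (nonzero since $h$ need not satisfy a Neumann condition), and processing the scalar-curvature gradient through the contracted second Bianchi identity $\bar\nabla\bar R = 2\,\mathrm{div}\,\overline{\Ric}$ produces $\int_M u\,\overline{\Ric}(\bar\nabla u,\nu_t)$ (nonzero because $\bar\nabla u$ is tangential on $M$ while $\overline{\Ric}(\nu_t,\cdot)$ need not be). Assembling these gives the last line of (\ref{eq deriv yam p}). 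I would organize the write-up so that the interior terms are matched line-by-line against the closed-case formula of \cite{Lu} and only the boundary line requires new attention; differentiating (\ref{eq u t normal}) in $t$ yields the relation $\partial_{\nu_t}h = -2\,\overline{\Ric}(\nu_t,\bar\nabla u)$ on $M$, which ties the two boundary terms together and serves as an independent consistency check on the final expression.
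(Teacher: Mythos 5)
Your interior computation (second paragraph) is correct, and it is a genuinely different route from the paper's. The paper eliminates the $h$-terms by differentiating the Euler--Lagrange equation (\ref{eq 1p}) \emph{in time}, multiplying by $2u$, and substituting; it is this step that creates the Hessian term $\int_W u\,\langle\overline{\Ric},\bar\nabla\bar\nabla u\rangle\,d\sigma_{\bar g_t}$, whose integration by parts (via $\mathrm{div}\,\overline{\Ric}=\tfrac12\bar\nabla\bar R$) produces the boundary integral $\int_M u\,\overline{\Ric}(\bar\nabla u,\nu_t)\,d\sigma_{g_t}$, while writing $\int_W\langle\bar\nabla u,\bar\nabla h\rangle=-\int_W u\bar\Delta h+\int_M u\,\partial_{\nu_t}h$ produces the other one. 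You instead integrate by parts in the opposite direction, $\int_W\langle\bar\nabla u,\bar\nabla h\rangle=-\int_W h\,\bar\Delta u$ (the boundary term dying by (\ref{eq u t normal})), and then apply (\ref{eq 1p}) pointwise to reduce all $h$-terms to $2\mathsf{Y}_p\int_W u^p h$; together with the differentiated constraint and (\ref{eq 1p}) multiplied by $\bar R u$, this yields the first three lines of (\ref{eq deriv yam p}) with the correct coefficients, never using the differentiated PDE, the Bianchi identity, or any surviving boundary integral. That is shorter and cleaner than the paper's derivation.

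But precisely for that reason your third paragraph is inconsistent with your second, and this is the gap you must repair before writing up. In your route \emph{no} boundary terms ever arise: your computation proves (\ref{eq deriv yam p}) with the last line absent. The mechanisms you invoke to ``assemble'' that last line --- processing $\int_W u\,\bar\Delta h$ and routing $\bar\nabla\bar R$ through Bianchi --- belong to the other derivation and are unavailable in yours: once the $h$-terms have been reduced to $2\mathsf{Y}_p\int_W u^ph$ there is no $\int_W u\bar\Delta h$ left to process, and handling such a term would in any case require the time-differentiated Euler--Lagrange equation (a step you never perform, since (\ref{eq 1p}) constrains $u$, not $h$). Appending the last line of (\ref{eq deriv yam p}) on top of your computation would be double counting; the result is numerically unharmed only because that line is \emph{identically zero}: by (\ref{eq evol eq normal der h}) one has $\partial_{\nu_t}h=-2\,\overline{\Ric}(\nu_t,\bar\nabla u)$ on $M$, while $\overline{\Ric}(\bar\nabla u,\nu_t)=\overline{\Ric}(\nu_t,\bar\nabla u)$ since $\bar\nabla u$ is tangential, so $2u\,\overline{\Ric}(\bar\nabla u,\nu_t)+u\,\partial_{\nu_t}h\equiv0$. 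So your formula and the stated one do agree, but to prove the theorem as stated you must establish this vanishing rather than claim the terms emerge from your integrations by parts. Note also that the relation $\partial_{\nu_t}h=-2\,\overline{\Ric}(\nu_t,\bar\nabla u)$ does not follow from ``differentiating (\ref{eq u t normal}) in $t$'' with $\nu_t$ held fixed: the normal vector itself evolves under the flow, and its evolution (\ref{eq deriv out normal}) is exactly where the Ricci terms come from; you need the analogue of the paper's Parts A and B. With that computation supplied, and your derivation presented as proving the identity with the boundary line replaced by $0$, your proof is complete.
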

\begin{proof}
We use a short-hand notation $\mathsf{Y}_p(t)=\mathsf{Y}_p(W,\bar
g(t))$.  First we note that
\begin{equation*}
\mathsf{Y}_p(t) = \int_{W}\left( \frac{4\left( n-1\right)
}{n-2}\left\vert \bar{\nabla} u\right\vert ^{2}+\bar R u^{2}\right)
d\sigma_{\bar g_t}.
\end{equation*}
We compute 
\begin{align}
\frac{d}{dt}\mathsf{Y}_p(t)  &  =\int_{W}\left(
\frac{8(  n-1)  }{n-2}\overline{\Ric}(\bar \nabla u, \bar \nabla u)+\frac{8 (
  n-1 )  }{n-2} \langle \bar \nabla u, \bar \nabla h \rangle \right)  d\sigma_{\bar g_t} \nonumber   \\
&  +\int_{W}\left(  \left(  \bar \Delta \bar R+
2\left\vert \overline{\operatorname*{Ric}}\right\vert
^{2}\right)  u^{2}+2\bar R u h\right)  d\sigma_{\bar g_t} \label{eq deriv yp const}\\
&  -\int_{W}\left(  \frac{4 (  n-1 )  }{n-2}\left\vert \bar \nabla
u\right\vert^{2}+\bar R u^{2}\right)  \bar R \
d\sigma_{\bar g_t} , \nonumber 
\end{align}
where we have used $\frac{\partial \left\vert \bar \nabla u\right\vert
  ^{2} }{\partial t} = 2\overline{\Ric} ( \bar \nabla u, \bar \nabla u) +2
\langle \bar \nabla u, \bar \nabla h \rangle$, $\frac{\partial \bar R }{\partial t}
= \bar \Delta \bar R + 2 \left\vert \overline{\operatorname*{Ric}} \right\vert
^{2}$ and $ \frac{\partial d\sigma_{\bar g_t} }{\partial t} = -\bar R\ 
d\sigma_{\bar g_t} $.

To eliminate the terms containing $h$ in formula (\ref{eq deriv yp
  const}), we use integration by parts to rewrite
\begin{align}
& \int_{W} \left ( \frac{8 (n-1)  }{n-2} \langle \bar \nabla u, \bar \nabla h \rangle
 +2\bar R u h \right )  d\sigma_{\bar g_t} \nonumber \\
=&\int_{W} \left ( -\frac{8(  n-1)  }{n-2}u\bar \Delta h+2\bar R u h
\right ) d\sigma_{\bar g_t} 
+ \frac{8(n-1)}{n-2}
\int_M u \partial_{\nu_t}h d \sigma_{g_t}. \label{eq integ parts 1}
\end{align}
Taking derivative $\frac{d}{dt}$ of both sides of the equation
(\ref{eq 1p}) and then multiplying the result by $2u$ as in
\cite[p.149]{Lu}, we get
\begin{align*}
-\frac{8(n-1)  }{n-2}u\bar \Delta h+2 \bar{R}uh  &  =\frac{16 (
  n-1 )  }{n-2}u \langle\overline{\Ric}, \bar \nabla \bar
\nabla u \rangle -2\left(  \bar \Delta \bar  R+2\left\vert
\overline{\operatorname*{Ric}}\right\vert ^{2}\right)  u^{2}\\
&  +2 \left (  \frac{d}{dt}\mathsf{Y}_{p}(t)  \right )
u^{p+1}+2p\mathsf{Y}_{p}(t)  u^{p}h.
\end{align*}
Plugging this formula into equation (\ref{eq integ parts 1}) and then 
plugging the resulting equation into the formula (\ref{eq deriv yp const}) for 
$\frac{d}{dt}\mathsf{Y}_{p}(t)$ to eliminate
the terms containing $h$, we have
\begin{align*}
  \frac{d}{dt}\mathsf{Y}_{p}(t) = & \int_{W}
  \left ( \frac{8 ( n-1 ) }{n-2} \overline{\Ric}( \bar \nabla u , \bar 
  \nabla u ) - \left( \bar \Delta R+2\left\vert
\overline{\operatorname*{Ric}}\right\vert ^{2}\right) u^{2} \right ) d
\sigma_{g_t}\\ & +\int_{W} \left ( \frac{16 ( n-1)
}{n-2}u \langle \overline{\Ric},\bar \nabla \bar \nabla u \rangle - \left(\frac{4 (n-1)
}{n-2}\left\vert \bar \nabla u\right\vert ^{2} +\bar R u^{2}\right) \bar{R} \right )
d \sigma_{\bar g_t}\\
& +\frac{2p}{p+1} \mathsf{Y}_{p}( t)
\int_{W}u^{p+1}\bar R d \sigma_{\bar g(t)} +2\frac{d}{dt}\mathsf{Y}_{p}( t)
+ \frac{8(n-1)}{n-2} \int_M u \partial_{\nu_t}h d
\sigma_{g_t},
\end{align*}
where we have used
\begin{equation*}
\int_{W}u^{p}hd\sigma_{\bar g_t} =\frac{1}{p+1}\int_{W}u^{p+1} \bar{R}
d \sigma_{\bar g_t}
\end{equation*}
which is obtained by taking derivative $\frac{d}{dt}$ of the
constraint (\ref{eq volume 1p}).

To simplify further the above formula for
$\frac{d}{dt}\mathsf{Y}_{p}(t)$, we compute using integration by parts
\begin{align*}
& \int_{W}u \langle \overline{\Ric}, \bar \nabla \bar \nabla u \rangle d \sigma_{\bar g_t} \\
  =& -\frac{1}{2}\int_{W}u \langle \bar \nabla \bar R, \bar \nabla u \rangle
  d \sigma_{\bar g_t} -\int_{W} \overline{\Ric} (\bar \nabla u,
  \bar \nabla u) d\sigma_{\bar g_t} +
  \int_M u \overline{\Ric} (\bar{ \nabla} u, \nu_t ) d \sigma_{g_t} \\
  = & \frac{1}{4}\int_{W}\bar R \bar \Delta u^{2}d \sigma_{\bar g_t} -
  \int_{W} \overline{\Ric} (\bar \nabla u, \bar \nabla u) d
\sigma_{g_t} + \int_M u \overline{\Ric} ( \bar{\nabla} u, \nu_t) d \sigma_{g_t},
\end{align*}
where we have used $\partial_{\nu_t}u =0$ to get the
last equality.  Hence we obtain
\begin{align}
\frac{d}{dt}\mathsf{Y}_{p}( t) = & \int_{W} \left (
\frac{8\left(  n-1\right)  }{n-2} \overline{\Ric}( \bar \nabla u, \bar \nabla u) + 
\left(  \bar \Delta \bar R+2
\left\vert \overline{\operatorname*{Ric}}\right\vert ^{2}\right)
u^{2} \right ) d \sigma_{\bar g_t} \nonumber \\
& + \int_{W} \left ( -\frac{4(n-1)  }{n-2} \bar R \bar \Delta u^{2} + \left (
\frac{4 (  n-1)  }{n-2}\left \vert \bar
\nabla u\right\vert ^{2}+\bar R u^{2}\right ) \bar{R}
 \right )  d \sigma_{\bar g_t} \label{eq tem 1}\\
& -\frac{2p}{p+1}\mathsf{Y}_{p}( t)  \int_{W}u^{p+1}
 \bar R d \sigma_{\bar g_t}   -\frac{16(n-1)}{n-2} \int_M u \overline{\Ric} ( \bar
 \nabla u, \nu_t) d  \sigma_{g_t}  \nonumber \\
& - \frac{8(n-1)}{n-2} \int_M u  \partial_{\nu_t}h d \sigma_{g_t}.
 \nonumber
\end{align}
Next we eliminate $\mathsf{Y}_{p}( t) $ from the
right-hand side of (\ref{eq tem 1}). Multiplying (\ref{eq 1p}) by $\bar{R}u$ and
integrating we get
\begin{align*}
\mathsf{Y}_{p}( t) \int_{W}u^{p+1}\bar R d \sigma_{\bar g_t}
= & \int_{W} \left ( - \frac{2(n-1)  }{n-2}\bar R \bar \Delta u^{2}+ \left(
\frac{4 (n-1)  }{n-2}\left\vert \bar \nabla u\right\vert ^{2}
+\bar R u^{2} \right)  \bar R \right ) d \sigma_{\bar g_t} ,
\end{align*}
hence
\begin{align*}
\frac{d}{dt}\mathsf{Y}_{p}\left(  t\right)   &  =\int_{W} \left ( 
\frac{8\left(  n-1\right)  }{n-2}\overline{\Ric} (\bar \nabla u, \bar \nabla u) + 
2\left\vert \overline{\operatorname*{Ric}}\right\vert ^{2}u^{2} \right )
d \sigma_{\bar g_t}\\
&  -\frac{p-1}{p+1}\int_{W}\left(  \frac{4\left(  n-1\right) }{
  n-2 }\left\vert \bar \nabla u\right\vert ^{2}+\bar Ru^{2}\right)  \bar R
d \sigma_{\bar g_t}  \\
&  +\int_{W} \left (u^2 \bar \Delta \bar R -
\frac{4(n-1)}{(p+1)(n-2)  }\bar R \bar \Delta u^{2}\right ) d \sigma_{\bar g_t}\\
&   -\frac{8(n-1)}{n-2} \int_M \left ( 2 u \overline{\Ric} (\nabla u, \nu_t) + u 
\partial_{\nu_t}h \right ) d \sigma_{g_t} .
\end{align*}
As calculated in \cite[p.151]{Lu} by using $\overline{\Ric}=
\overline{\Ric}^{0}+\frac{\bar{R}}{n} \bar g$ and $\left\vert
\overline{\Ric} \right\vert ^{2} =\left\vert \overline{\Ric}^0\right\vert ^{2} + 
\frac{\bar R^2}{n}$, we get (\ref{eq deriv yam p}) from the formula above.
\end{proof}
 \begin{remark}
  Concerning the assumption in Theorem A,
  that there are functions $u(t)$ such that the metrics $u \left(
  t\right) ^{\frac{4} {n-2}} \bar{g}_t $ are a $C^{1}$-family of
  smooth Yamabe metrics, there are two cases here.
\begin{enumerate}
\item[{Case 1:}] The relative Yamabe constant $Y_{[\bar
    g_0]}(W,M;[g_0])\leq 0$. Then there is a unique solution $u(0)$ of
  the Yamabe problem, and it could be shown that for small $t$ the
  Yamabe metrics $u(t)^{\frac{4} {n-2}}\bar{g}_t$ is a smooth in $t$
  family of metrics, this is a consequence of recent result due to
  S. Hamanake (\cite{AK}) who proved a relevant Koiso's decomposition type
  result for manifolds with boundary (cf. to the Koiso’s decomposition
  theorem \cite[Corollary 2.9]{Ko79} for closed manifolds).
\item[{Case 2:}] The  relative Yamabe constant $Y_{[\bar g_0]}(W,M;[g_0])>
  0$. Then, in general, the corresponding Yamabe metric is not
  unique. Thus in this case it is not clear whether there exists a
  $C^{1}$-family of smooth functions $u(t)$ which satisfy the
  assumption even for a short time.
\end{enumerate}
\end{remark}
\subsection{Boundary terms}
Below we compute the boundary terms appearing in (\ref{eq deriv yam
  p}) using local coordinates. Fix a boundary point $p \in M$ and a
time $t$, we choose local coordinates $(x^1, \cdots, x^n)$ on $W$
around $p$ such that $(\bar{g}_t)_{ij}(p)=\delta_{ij}$ and the outward
unit normal vector $\nu_t(p) = \partial_n =\frac{\partial}{\partial
  x^n}$.
 
{\bf Part A,  $\partial_t \nu_t$}. 
The outward unit  normal vector at time $\tilde{t}$ near $t$ can be written as
\[
\nu_{\tilde{t}} (p)= \frac{1}{b(\tilde{t})} \left (\sum_{i^{\prime}=1}^{n-1}a^{i^{\prime}}
(\tilde{t}) \partial_{i^{\prime}}  +\partial_n \right ),
\]
where $a^{i^{\prime}} (t) =0$ and 
\[
b(\tilde{t}) = \sqrt{(\bar{g}_{\tilde{t}})_{nn}(p ) + 2 \sum_{i^{\prime}=1}^{n-1}(\bar{g}_{\tilde{t}}
)_{i^{\prime}n}( p) a^{i^{\prime}}(\tilde{t}) + \sum_{i^{\prime},j^{\prime}=1}^{n-1}(\bar{g}_{
\tilde{t}})_{i^{\prime} j^{\prime}} (p) a^{i^{\prime}}(\tilde{t}) a^{j^{\prime}} (\tilde{t})    }.
\] 
From 
\[
0= b(\tilde{t}) \cdot  \bar{g}_{\tilde{t}}(\nu_{\tilde{t}}, \partial_{i^{\prime}}) (p) =
 \sum_{j^{\prime}=1}^{n-1} a^{j^{\prime}}(\tilde{t}) (\bar{g}_{\tilde{t}})_{i^{\prime}j^{\prime}} (p)+ 
(\bar{g}_{\tilde{t}})_{i^{\prime}n} (p)
\]
for each $i^{\prime}=1,2, \cdots, n-1$,  by taking the derivative
$\left . \frac{d}{d\tilde{t}} \right |_{\tilde{t} =t}$ of the equality above  we have
\[
 \sum_{i^{\prime}=1}^{n-1} \left ( \left . \frac{d a^{j^{\prime}}}{d \tilde{t}}
  \right |_{\tilde{t} =t}  \delta_{i^{\prime}j^{\prime}} 
 + a^{j^{\prime}}(t) (-2(\operatorname{
Ric}_{\bar{g}_t})_{i^{\prime} j^{\prime}} (p)) \right ) -2
(\operatorname{Ric}_{\bar{g}_t})_{i^{\prime} n}(p) =0.
\]
 Hence 
 \[
 \left . \frac{d a^{i^{\prime}}}{d \tilde{t}}  \right |_{\tilde{t} =t}  =2(\operatorname{Ric}
 _{\bar{g}_t})_{i^{\prime} n}(p), \qquad 
 \left . \frac{d b}{d\tilde{t}}  \right |_{\tilde{t} =t} =-(\operatorname{Ric}_{\bar{g}_t})_{nn}
 \]
 and
 \begin{equation} \label{eq deriv out normal}
   \partial_t \nu_t (p) =2 \sum_{i^{\prime}=1}^{n-1}
   (\operatorname{Ric}_{\bar{g}_t})_{i^{\prime}
  \nu_t} (p) \partial_i  + (\operatorname{Ric}_{\bar{g}_t})_{ \nu_t  \nu_t} (p) \partial_{ \nu_t}.
 \end{equation}
\vspace{2mm}

%%%
{\bf Part B, $\partial_{\nu_t}h$}. Write $\nu_t = \nu^i_t \partial_i$. 
   From $ \partial_n u  = \partial_{\nu_t}u =0$ on $M$, 
  we have by (\ref{eq deriv out normal})
  \begin{align*}
 0= & \ \ \partial_t (\nu^i_t \partial_i u)   =
 (\partial_t \nu^i_t ) \partial_i u + \nu^i_t  \partial_t  \partial_i u \\
 = & \ \ 2  \sum_{i^{\prime}=1}^{n-1}  (\operatorname{Ric}_{\bar{g}_t})_{i^{\prime}n} 
 \partial_{i^{\prime}} u + (\operatorname{Ric}_{\bar{g}_t})_{nn} \partial_n u 
 + \nu^i_t \partial_i  h.
  \end{align*}
We have established
  \begin{equation}
 \partial_{\nu_t}h  = -2 \sum_{i^{\prime}=1}^{n-1}  (\operatorname{Ric}_{\bar{g}_t})
 _{i^{\prime} \nu_t} \partial_{i^{\prime}} u. \label{eq evol eq normal der h}
 \end{equation}
\vspace{2mm}

%%%
{\bf Part C, $\overline{\Ric}( \bar{\nabla} u, \nu )$}.   This term is
\begin{align}
\bar{R}_{ij} \bar{\nabla}_i u \nu_j =& \sum_{i^{\prime}=1}^{n-1} \bar{R}_{i^{\prime}n} 
\partial_{i^{\prime}}  u + \bar{R}_{nn} \bar{\nabla}_n u  \notag \\
= & \sum_{i^{\prime}=1}^{n-1}  (\operatorname{Ric}_{\bar{g}_t})_{i^{\prime} \nu_t} 
\partial_{i^{\prime}} u  .  \label{eq ricci boundary  term}
\end{align}

%%%%
\vskip .2cm

Finally we give a proof of Theorem A.  When $p=\frac{n+2}{n-2}$ in Theorem
\ref{prop 1}, we have that $\mathsf{Y}_{p}(t)=Y_{[\bar
    g_t]}(W,M;[g_t])$ and
\begin{align}
 \frac{d}{dt}Y_{[\bar g_t]}(W,M;[g_t])  = & \int_{W}\left(
\frac{8 ( n-1 )  }{n-2} \overline{\Ric}^{0} ( \bar{\nabla} u, \bar{\nabla} u) +2\left\vert
\overline{\Ric}^{0}\right\vert ^{2}u^{2} \right)  d\sigma_{\bar{g}_t} \nonumber \\
& +\int_{W} \left (u^2 \bar{\Delta} \bar{R} - \frac{2(n-1)}{n  }
\bar{R} \bar{\Delta} 
u^{2} \right ) d \sigma_{\bar{g}_ t}  \label{eq deriv yam}\\
&   -\frac{8(n-1)}{n-2} \int_M \left ( 2 u \overline{\Ric} (\nabla u, \nu)  +  u 
\partial_{\nu_t}h \right ) d \sigma_{g_t} \nonumber .  
\end{align}

Note that if the initial metric $\bar g_0$ is a Yamabe metric with
constant scalar curvature $R_{\bar g_0}= Y_{[\bar g_0]}(W,M;[g_0])$,
then $u(0)=1$ and $\left .  \partial_{\nu_t} h \right
|_{t=0} =0$ by (\ref{eq evol eq normal der h}), thus the formula
(\ref{eq deriv yam}) evidently reduces to (\ref{Yam}).

%%%%%%%%%%%%%%%%%%%%%%%%%%%%%
%%%%%%%%%%%%%%%%%%%%%%%%%%%%%

\end{document}